\documentclass[a4paper,twoside,english]{amsart}
\usepackage[T1]{fontenc}
\usepackage[utf8]{inputenc}
\usepackage{babel}
\usepackage{verbatim}
\usepackage{amsthm}
\usepackage{amsmath}
\usepackage{amssymb}
\usepackage{a4wide}
\usepackage{tikz-cd}
\usepackage[normalem]{ulem}
\usepackage{thmtools}

\usepackage[dvipsnames]{xcolor}

\pdfpageheight\paperheight 
\pdfpagewidth\paperwidth

\theoremstyle{plain}
    \newtheorem{theorem}{Theorem}[section]
    \newtheorem{lemma}[theorem]{Lemma}
    \newtheorem{proposition}[theorem]{Proposition}
    \newtheorem{corollary}[theorem]{Corollary}
    \newtheorem{algorithm}[theorem]{Algorithm}

\theoremstyle{definition}
    \newtheorem{remark}[theorem]{Remark}
    \newtheorem{problem}[theorem]{Problem}
    \newtheorem{example}[theorem]{Example}
    
    \newtheorem{notation}[theorem]{Notation}

\usepackage[
    bookmarksnumbered=true,
    bookmarksopen=false,
    breaklinks=false,
    pdfborder={0 0 1},
    backref=false,
    colorlinks=false]{hyperref}
\hypersetup{
    pdfauthor={
        A.~Gallese,
        D.~Lombardo,
        F.~Naccarato,
        and U.~Zannier}}

\usepackage{cleveref}
    \crefname{lemma}{lemma}{lemmas}
    \Crefname{lemma}{Lemma}{Lemmas}
    \crefname{proposition}{proposition}{propositions}
    \Crefname{proposition}{Proposition}{Propositions}
    \crefname{corollary}{corollary}{corollaries}
    \Crefname{corollary}{Corollary}{Corollaries}
    \crefname{definition}{definition}{definitions}
    \Crefname{definition}{Definition}{Definitions}
    \crefname{remark}{remark}{remarks}
    \Crefname{remark}{Remark}{Remarks}
    \crefname{notation}{notation}{notations}
    \Crefname{notation}{Notation}{Notations}
    \crefname{algorithm}{algorithm}{algorithms}
    \Crefname{algorithm}{Algorithm}{Algorithms}
    \crefname{example}{example}{examples}
    \Crefname{example}{Example}{Examples}

\usepackage[left=3.6cm,right=3.9cm,top=3cm,bottom=3cm]{geometry}

\def\CVD{{\hfill\hfil{\lower 2pt\hbox{\vrule\vbox to 7pt
{\hrule width  4pt\varphifill\hrule}\varphirule}}}\par}

\title[Finding the complement of an elliptic curve inside a Jacobian]{Finding the complement of an elliptic curve \\ inside a Jacobian}

\author[Gallese]{Andrea Gallese}
\address{Scuola Normale Superiore, Piazza dei Cavalieri 7, 56126 Pisa, Italy}
\email{andrea.gallese@sns.it}

\author[Lombardo]{Davide Lombardo}
\address{Department of mathematics, University of Pisa, Italy}
\email{davide.lombardo@unipi.it}

\author[Naccarato]{Francesco Naccarato}
\address{D-MATH, ETH Z\"urich, Switzerland}
\email{francesco.naccarato@math.ethz.ch}

\author[Zannier]{Umberto Zannier}
\address{Scuola Normale Superiore, Piazza dei Cavalieri 7, 56126 Pisa, Italy}
\email{umberto.zannier@sns.it}

\subjclass{Primary: 14H40, 14Q20. Secondary: 11G05, 14K02.}
\keywords{Split Jacobians, elliptic curves, genus 2 curves, effective algorithms, Prym varieties, Poincaré decomposition}

\begin{document}

\begin{abstract}
This note gives a simple algorithm for the following effectivity problem: given a genus $2$ curve $X$ together with a nonconstant map $\pi:X\to E$ to an elliptic curve, determine an elliptic curve $E'$ and a map $\pi':X\to E'$ independent of $\pi$. Equivalently, we compute the complementary elliptic factor in the decomposition of $\operatorname{Jac}(X)$ up to isogeny. While the problem has been studied extensively, and more general ones have been solved by deep and powerful techniques, we are not aware of a reference for the simple explicit procedure described here.
\end{abstract}

\maketitle

\section{Introduction}    

A natural problem in the explicit geometry of abelian varieties is to determine, effectively, a Poincar\'e decomposition of a given abelian variety $A$, that is, to describe $A$ as the product (up to isogeny) of simple abelian varieties. Closely related is the problem of deciding whether two given abelian varieties are isogenous. Even for elliptic curves, such questions can already be delicate, although in many cases non-isogeny is easy to establish, and effective criteria in the CM case are classical.

Effectivity in problems of this form is in general highly nontrivial. Foundational work of {\sc Masser} and {\sc W\"ustholz}~\cite{MasserWustholz1993,MasserWustholz1995} gives a powerful and general framework, in particular over number fields. These results have had many important applications, but they rely on deep methods and are usually not well suited for explicit computations in special cases. Presumably, the same circle of ideas should also yield a complete answer over any field finitely generated over $\mathbb Q$. Over function fields, one expects simpler and more direct arguments, but we are not aware of an explicit treatment of this case in the literature.

Recently, independent motivations led {\sc Gallese} and {\sc Naccarato} to the following special case of the problem of computing an isogeny decomposition:

\begin{problem}\label{prob}
Given a curve $X$ of genus $2$ with a rational nonconstant map $\pi:X\to E$ to an elliptic curve $E$, find an independent map $\pi':X\to E'$ to an elliptic curve $E'$.    
\end{problem}

\begin{remark}
Here \emph{independent} may be defined in several equivalent ways: for instance, by requiring that the image of $(\pi,\pi'):X\to E\times E'$ is not contained in a translate of a proper algebraic subgroup. This is automatic if $\pi'$ is nonconstant and $E,\, E'$ are not isogenous. We also note that the pair $E,\, E'$ is unique up to isogeny.
\end{remark}

Such a map $\pi'$ does in fact exist by the classical theory: if $J$ denotes the Jacobian of a smooth model of $X$, then $J$ is isogenous to a product $J \sim E\times E'$ of elliptic curves, and the composition $X \to J \to E'$ gives the desired map. Our objective here is to make $E'$ and $\pi'$ explicit.
Our aim is not to pursue generality, but rather to isolate in this setting a very simple and concrete procedure for recovering the complementary elliptic factor and the corresponding map.

We shall briefly recall below some of the motivations that led {\sc{Gallese}} and {\sc{Naccarato}} to consider this problem from an effective point of view. For the moment, we only note that, natural as the question may seem, we have not been able to find in the literature an explicit treatment in the form considered here; moreover, when asking colleagues about it, we were not pointed to a reference. Of course, related aspects of genus-$2$ curves with split Jacobians and maps to elliptic curves have been studied before: see, for instance,~\cite{Kuhn},~\cite{freyCurvesGenus21991},~\cite{Shaska}, and~\cite{MR3427148}. In contrast to the present work, where a map $X\to E$ is assumed to be given, the algorithmic problem of finding such a map (when it exists) is considered in~\cite{Lombardo}.

Since in the special genus-$2$ setting we found a very simple method, which also appears to be computationally efficient, we felt it worthwhile to record it. 
We would be surprised if the underlying principle were genuinely new, so the basic idea is probably already familiar to some experts, but an explicit account may still be of use to others.

In the sequel we first describe the method. We then collect several remarks and questions suggested by the construction, together with a few motivating examples, and finally we briefly discuss related problems and possible generalizations.

\section{The method}

We assume that we are given a map $\pi:X\to E$, with $X$, $E$, and $\pi$ all defined over a ground field $K$. We think of $K$ as a \emph{computable} field of characteristic different from 2, namely a field finitely generated over its prime field (say $\mathbb Q$), presented by finitely many explicit generators and algebraic relations. In practice, we assume that $X$ and $E$ are given by explicit equations, for instance as plane or space curves. We will take finite and computable extensions of the ground field $K$ as necessary (for example, to fix a $K$-rational base point on $X$) without further comment.

To keep the note as brief as possible, we shall not dwell on routine technical details, which can be supplied or modified without difficulty. For example, we will sometimes assume that $X$ is smooth and projective, although for actual computations it may be convenient to work with a singular or affine model.

\begin{notation}
    We suppose that $X$ is given by the affine equation 
\[
X : \, y^2=F(x)
\]
where $F\in K[x]$ has degree $5$ or $6$, and that $E$ is given by the Weierstrass equation 
\[
E: \, w^2=f(z),
\]
where $f(z) \in K[z]$ is monic of degree $3$. 
The map $\pi$ may then be written as
\[
\pi(x,y)=(Z(x,y), \, W(x,y)),
\]
where $Z$ and $W$ are rational functions over $K$.
\end{notation}

\begin{remark}
    Using the relation $y^2=F(x)$, we may write both $Z$ and $W$ as polynomials in $y$ of degree at most $1$ (with coefficients that are rational functions in $x$). They satisfy $W^2=f(Z)$ as functions on the curve $X$.
\end{remark}

\begin{remark}\label{rmk: standard form map}
Let $i$ denote the hyperelliptic involution on $X$, given in affine coordinates by $i(x,y)=(x,-y)$. Then the map $\pi+\pi\circ i$ factors through the quotient $X/i\cong \mathbb P^1$.
Since there is no nonconstant morphism from $\mathbb P^1$ to $E$, this map must be constant. After changing the origin on $E$ (or equivalently, post-composing with a translation on $E$), we may therefore assume that it is constantly equal to the identity of $E$, that is, $\pi\circ i=-\pi$. In terms of the affine representation $\pi(x,y)=(Z(x,y),\, W(x,y))$, this means that $Z$ and $W/y$ are rational functions of $x$.
In the following we thus write
\[ \pi(x,y)=(Z(x),\, yW(x)). \]
\end{remark}

\begin{remark}
    Whenever necessary, we may assume that $\pi\colon X \to E$ is \emph{primitive}, that is, it does not admit any proper intermediate subcover. One can always reduce to this case: factoring $X \to E$ as $X \to \tilde{E} \to E$ with $X \to \tilde{E}$ primitive amounts to finding the intermediate subfields between $K(E)$ and $K(X)$, which can be done by Galois theory. In the primitive case, there is a complementary map $\pi' : X \to E'$ 
    of degree equal to $\deg \pi$ \cite[p.~155]{freyCurvesGenus21991}; this is the map we will determine.
\end{remark}

\subsection{Coordinates on \texorpdfstring{$X^{(2)}$}{X^(2)}}
\label{section: coordinates}
Recall that the \emph{symmetric square} $X^{(2)}$ of $X$ is by definition the quotient of $X^2$ by the involution $(p_1, p_2) \mapsto (p_2, p_1)$. Most of our calculations will take place in $X^{(2)}$, so we now describe its function field. The function field $K(X^2)=K(x_1, x_2, y_1, y_2)$ is the algebraic extension of $K(x_1, x_2)$ with relations $y_1^2=F(x_1)$ and $y_2^2 = F(x_2)$.
It is a Galois extension of $K(x_1+x_2, x_1x_2)$ with group isomorphic to the dihedral group $D_4$, generated by
\[
\begin{aligned}
r \colon\;& \left|\,
\begin{array}{rcl}
(x_1,x_2) &\mapsto& (x_2,x_1),\\
(y_1,y_2) &\mapsto& (y_2,-y_1),
\end{array}
\right.
\qquad &
s \colon\;& \left|\,
\begin{array}{rcl}
(x_1,x_2) &\mapsto& (x_2,x_1),\\
(y_1,y_2) &\mapsto& (y_2,y_1).
\end{array}
\right.
\end{aligned}
\]
The field $K(X^{(2)})$ is the subfield of $K(X^2)$ fixed by $s$. In particular,
\[
    K(X^{(2)})=K(x_1+x_2,\, x_1x_2,\, y_1+y_2,\, y_1y_2).
\]
\vspace{-0.4cm}
\begin{notation}
We will use the four functions
\[
s_x := x_1+ x_2, \quad p_x := x_1x_2, \quad s_y := y_1+y_2, \quad p_y := y_1y_2
\]
as our system of coordinates on $X^{(2)}$.
\end{notation}

\subsection{The Jacobian of \texorpdfstring{$X$}{X}}
We view the Jacobian $J= \operatorname{Jac} X$ as birationally equivalent to $X^{(2)}$. Note that $X^{(2)}$ may be identified with the set of effective divisors of degree $2$ on $X$. Thus a point of $X^{(2)}$ may be written as
$(p_1)+(p_2)$, and corresponds to the class
$[(p_1)+(p_2)-2(p_0)]\in J$,
where $[\cdot]$ denotes linear equivalence of divisors and $p_0\in X$ is a fixed base point.
We may embed $X$ into $X^{(2)}$ by $p \longmapsto (p)+(p_0)$, and into $J$ by $x\longmapsto [(p)-(p_0)]$.

\begin{remark}
\label{remark: blow down}
    Notice that the map $X^{(2)} \to J$ need not be bijective: the preimage of a divisor class $[D] \in J$ consists of all pairs $(p_1) + (p_2) \in X^{(2)}$ such that $(p_1) + (p_2) - 2(p_0)$ is linearly equivalent to $D$. This preimage is naturally identified with the complete linear system $|D+2(p_0)|$, which is generically a single point (when $\dim |D+2(p_0)| = 0$) and isomorphic to $\mathbb{P}^1$ along a closed subvariety of $J$ (where $\dim |D+2(p_0)| = 1$)~\cite[\S 5]{MR861976}.
    Since two distinct degree-$2$ effective divisors on a genus $2$ curve are linearly equivalent only when they are in the canonical class, the morphism $X^{(2)} \to J$ is the blow-up at the point $[K_X-2(p_0)]$, where $[K_X]$ is the canonical class. If we take $p_0$ to be a Weierstrass point, $[K_X - 2(p_0)]$ is simply the origin of $J$.
\end{remark}

One may also describe the group law on $J$ in these terms: for generic points, it is expressed by linear equivalence of divisors, see for instance~\cite{Serre1988}.

\subsection{A model for the complementary curve \texorpdfstring{$E'$}{E'}}
Our approach is based on the following observation.
\begin{proposition}
    Let $\pi :X \to E$ be a primitive cover.
    For $c\in E$ a given point, consider the curve $\Phi_c$ in $X^{(2)}$ defined by the equation \[\pi(p_1)+\pi(p_2)=c, \] where $p_1, \, p_2\in X$. 
    The complementary curve $E'$ may be obtained as a smooth model of the unique irreducible component with positive genus. Moreover, for all but finitely many $c$, there is a unique component.
\end{proposition}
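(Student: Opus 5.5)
Consider the composite $\sigma: X^{(2)} \to J \to E$, $(p_1)+(p_2)\mapsto \pi(p_1)+\pi(p_2)$. On $J$ this is the homomorphism $\pi_*: J\to E$ induced by pushforward of divisors (up to a translation from the base point; choose $p_0$ a Weierstrass point and use $\pi\circ i=-\pi$ from Remark~\ref{rmk: standard form map}, so $\pi(p_0)$ is $2$-torsion and the translation is harmless). Thus $\Phi_c$ is the preimage in $X^{(2)}$ of the fibre $\pi_*^{-1}(c')$ for a suitable $c'$ depending on $c$. The plan is to identify these fibres of $\pi_*$ on $J$, transfer them to $X^{(2)}$ through the blow-up description in Remark~\ref{remark: blow down}, and show the positive-genus part is isogenous to $E'$.

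\textbf{Step 1 (the kernel).} Since $\pi$ is primitive, $\pi^*:E\to J$ is injective (a nontrivial kernel would give a factorization through an isogeny $\tilde E\to E$, i.e.\ an intermediate cover). Hence $\pi_*$ is surjective and $\ker\pi_*$ is a $1$-dimensional algebraic subgroup of $J$. Its identity component $E'':=(\ker\pi_*)^0$ is an elliptic curve with $E''\cap \pi^*E$ finite, hence $J\sim E\times E''$, and by uniqueness of the Poincaré decomposition $E''$ is isogenous to $E'$. (Indeed, with the degree-$n$ complement of \cite{freyCurvesGenus21991}, $\ker \pi_*$ is connected and equals $\pi'^*E'$, so one gets $E'$ itself up to isomorphism; I would verify connectedness via $\pi_*\pi^*=[n]$ and the standard fact that for primitive covers $\ker\pi_*$ is the complementary curve, but isogeny suffices for the model.) Each fibre $\pi_*^{-1}(c')$ is then a finite disjoint union of translates of $E''$.

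\textbf{Step 2 (lifting to $X^{(2)}$).} The map $X^{(2)}\to J$ is the blow-up at the single point $O=[K_X-2p_0]$, with exceptional curve $\mathbb P^1$. Therefore $\Phi_c$ consists of the strict transforms of the translates of $E''$ in the fibre, each birational to a genus-one curve, together with the exceptional $\mathbb P^1$ exactly when $O$ lies in the fibre, i.e.\ for one value of $c$. The genus-one components have smooth model isomorphic to $E''$, giving the first claim whenever exactly one such component exists.

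\textbf{Step 3 (uniqueness, the main obstacle).} The count of translates of $E''$ in a fibre is $|\ker\pi_*/E''|$, independent of $c$; so "unique component for all but finitely many $c$" requires $\ker\pi_*$ connected, apart from the finitely many $c$ where the fibre picks up the exceptional $\mathbb P^1$. Proving connectedness is where the real work lies. I would argue dually: $\pi_*$ is the dual of $\pi^*$ under the principal polarization, and a homomorphism $A\to E$ has connected kernel iff its dual $E\to\hat A$ is injective. Since $\pi^*$ is injective by primitivity (Step 1), $\ker\pi_*$ is connected. This yields a unique irreducible component of positive genus in every $\Phi_c$, isomorphic to $E'$, plus the rational exceptional curve for the single exceptional $c$, which proves both statements.
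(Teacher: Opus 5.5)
Your proposal is correct and follows the paper's proof: you identify $\Phi_c$ with the full preimage in $X^{(2)}$ of the fibre of $\pi_*$ over $c-2\pi(p_0)$ via the blow-up $X^{(2)}\to J$, and conclude from the connectedness of $\ker\pi_*$. The only difference is that the paper simply cites Frey--Kani for this connectedness, whereas you derive it from the injectivity of $\pi^*$ (which follows from primitivity) together with the duality between $\pi^*$ and the surjective map $\pi_*$; this is the standard argument behind that citation.
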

\begin{proof}
    Since $J$ is a blow-down of $X^{(2)}$ contracting a single copy of $\mathbb{P}^1$ (Remark \ref{remark: blow down}), the curve $\Phi_c$ is birational to a fiber of the map $\pi_* : J \to E$ (specifically, the one over the point $c-2\pi(p_0)$). Note that $\ker \pi_\ast \subseteq J$ is connected since $\pi$ is primitive~\cite[p.~154]{freyCurvesGenus21991}.
\end{proof}
\begin{remark}
    When $\pi$ is not primitive, $E'$ is birational to any positive-genus component of the curve $\pi(p_1)+\pi(p_2)=c$.
\end{remark}

\begin{remark}
\label{remark: trace computations}
The fiber over $c = 0$ is given by $\pi(p_1) = -\pi(p_2)$, so in the standard form of Remark \ref{rmk: standard form map}, this translates into the equations
$Z(x_1) = Z(x_2)$ and $y_1 W(x_1) + y_2 W(x_2) = 0.$
The first equation is divisible by $(x_1 - x_2)$, and this factor cuts out a component of $\Phi_0$ isomorphic to $\mathbb{P}^1$ that maps to a point in $E' \subseteq J$. This is the only genus-$0$ component of $\Phi_0$, provided that $\pi$ is primitive~\cite[\S 3]{gallese}. The equations for the complementary curve $\tilde{\Phi}_0$ are thus
\begin{equation}
    \label{eq: equations in the zero fiber}
    \frac{Z(x_1) - Z(x_2)}{x_1 - x_2} = 0 \qquad \text{and} \qquad y_1 W(x_1) + y_2 W(x_2) = 0 \quad \text{in}\,\, K(X^{(2)}).
\end{equation}
In particular, these equations define an irreducible curve.
\end{remark}

\subsection{The algorithm}\label{subsec: algorithm}

\begin{algorithm} 
\label{algorithm} Given a primitive cover $X \to E$, proceed as follows.

Step 1. Compute functions $Z(x)$ and $W(x)$ as in \Cref{rmk: standard form map}. 

Step 2. Express the equations for the complementary curve $\tilde{\Phi}_0$ in $X^{(2)}$ given in \Cref{remark: trace computations} in terms of the symmetric functions $x_1 + x_2, \, x_1 x_2, \, y_1 + y_2, \, y_1 y_2.$

Step 3. Compute a smooth curve $E'$ birational to $\tilde{\Phi}_0$ from these equations. 

Step 4. Compute the complementary map $\pi'$ (\Cref{algo: complementary map}).
\end{algorithm}

\begin{remark}
    This algorithm computes a subvariety $E'$ of $J$ up to birational equivalence. The construction will show that $E'$ is also isomorphic to the quotient $J/\pi^* E$. 
\end{remark}

We now describe each of these steps in greater detail.

\subsubsection{Step 1.}
Take an arbitrary point $p_0 \in X$. We compute $q_0 := \pi(p_0) + \pi(i(p_0)) \in E$ and a point $q_1 \in E$ such that $2q_1 = q_0$. Replacing $\pi$ with $\tau_{-q_1} \circ \pi$, where $\tau_{-q_1}$ is translation by $-q_1$, we may ensure that $\pi + \pi \circ i$ is constantly equal to $0 \in E$. By \Cref{rmk: standard form map}, $\pi$ is then of the form $(x,y) \mapsto (Z(x), \, yW(x))$.

\subsubsection{Step 2.}
\Cref{remark: trace computations} gives the equations of the complementary genus-$1$ curve $\tilde{\Phi}_0$ inside $X^{(2)}$. These equations are expressed in the variables $x_1, x_2, y_1, y_2$ and are fixed by the Galois automorphism $s$, so they belong to the subfield $K(X^{(2)})$.
Thus, the only computational task to be solved in Step 2 is the following: given an element $g$ of the field $K(X^2)=K(x_1, x_2, y_1, y_2)$, explicitly represented as a rational function in $x_1, x_2, y_1, y_2$ and known to lie in the subfield $K(X^{(2)})$, express $g$ as a rational function of $s_x, p_x, s_y, p_y$. Note that elements of $K(X^2)$ may be uniquely represented as $K(x_1,x_2)$-linear combinations of $1, y_1, y_2, y_1y_2$.

Decomposing $g$ along this basis and using its stability under the Galois automorphism $s$, we may write $g= g_0(x_1, x_2) + g_1(x_1,x_2) \, y_1 + g_1(x_2, x_1) \, y_2 + g_2(x_1, x_2) \, y_1y_2$ with $g_0(x_1, x_2)$ and $g_2(x_1, x_2)$ symmetric in their two variables. By taking iterated traces in function fields, one finds the identity
\begin{equation}\label{eq: representing g}
g_1(x_1,x_2) \, y_1 + g_1(x_2, x_1) \, y_2 = h_1(x_1, x_2) (y_1+y_2) - h_2(x_1, x_2) \, (y_1y_2)(y_1+y_2),
\end{equation}
where the functions
\[
h_1(x_1,x_2) = \frac{g_1(x_1,x_2) \, F(x_1) - g_1(x_2, x_1) \, F(x_2)}{F(x_1)-F(x_2)}, \quad h_2(x_1, x_2)=\frac{ g_1(x_1,x_2)  - g_1(x_2, x_1) }{F(x_1)-F(x_2)}
\]
are symmetric in $x_1, x_2$. This reduces our problem to representing the symmetric functions $g_0, g_2, h_1, h_2$ as rational functions of $x_1+x_2, x_1x_2$. This may be done by standard methods.

\subsubsection{Step 3.}
Since $F(x_1)+F(x_2)$ and $F(x_1)F(x_2)$ are symmetric functions in $x_1, x_2$, there exist computable rational functions $F_{\operatorname{sym}, +}$ and $F_{\operatorname{sym}, \cdot}$ such that
\[
F(x_1)+F(x_2) = F_{\operatorname{sym}, +}(s_x, p_x), \quad F(x_1)F(x_2) = F_{\operatorname{sym}, \cdot}(s_x, p_x).
\]
The function field of $X^{(2)}$ is generated by $s_x, p_x, s_y, p_y$, subject to the relations $s_y^2 = y_1^2 + 2y_1y_2 + y_2^2 = F(x_1)+F(x_2)+2p_y$ and $p_y^2=y_1^2y_2^2=F(x_1)F(x_2)$, that is,
\begin{eqnarray}
    p_y = \frac{s_y^2 - F_{\operatorname{sym}, +}(s_x, p_x)}{2}, \label{eq: eliminate py} \\
p_y^2 = F_{\operatorname{sym}, \cdot}(s_x, p_x). \label{eq: py2}
\end{eqnarray}
Thus, our model for $\tilde{\Phi}_0$ is cut out in the four-dimensional affine space in the variables $s_x, p_x, s_y, p_y$ by equations \eqref{eq: eliminate py} and \eqref{eq: py2}, together with
\begin{eqnarray}
\frac{Z(x_1)-Z(x_2)}{x_1-x_2}=0, \label{eq: P1} \\
y_1W(x_1) + y_2W(x_2)=0. \label{eq: select component}
\end{eqnarray}
By Step 2, we know how to express \eqref{eq: P1} and \eqref{eq: select component} in terms of $s_x, p_x, s_y, p_y$. 
Next, we show that the curve defined in the $(s_x, p_x)$-plane by \eqref{eq: P1} has geometric genus $0$.

\begin{remark}
    Although we write \eqref{eq: P1} and \eqref{eq: select component} as functions in $x_1, \, x_2, \, y_1, \, y_2$, one should see them as functions in $s_x, \, p_x, \, s_y, \, p_y$. Formally, one starts with the variety defined by \eqref{eq: P1} and \eqref{eq: select component} in $X^2$ and considers its image in $X^{(2)}$.

    Similarly, we may start with the curve defined by equation \eqref{eq: P1} in $(\mathbb{P}^1)^2$ and consider its image in the symmetric quotient $(\mathbb{P}^1)^{(2)}$. We denote this curve by $T$. This is the object we informally referred to as the curve defined by \eqref{eq: P1} in the $(s_x, p_x)$-plane.
\end{remark}

\begin{lemma}
    The curve $T$ has geometric genus $0$.
\end{lemma}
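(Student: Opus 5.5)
The plan is to realise $T$ as the quotient of the genus-$1$ curve $\tilde{\Phi}_0$ by an involution that is not a translation. Such a quotient is rational, so $T$ will have geometric genus $0$.

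\emph{Step 1: the natural map onto $T$.} Forgetting $y_1,y_2$ gives a map from $\tilde{\Phi}_0$ to $T$, i.e.\ $(s_x,p_x,s_y,p_y)\mapsto(s_x,p_x)$. I first check that this map is dominant and generically of degree $2$. Fix a generic point $\{x_1,x_2\}$ of $T$, so that $Z(x_1)=Z(x_2)$ with $x_1\neq x_2$. The identity $W^2=f(Z)$ on $X$, in the standard form of \Cref{rmk: standard form map}, reads $F(x)W(x)^2=f(Z(x))$. Hence
\[
y_1^2W(x_1)^2=f(Z(x_1))=f(Z(x_2))=y_2^2W(x_2)^2 .
\]
So, once $y_1$ is chosen with $y_1^2=F(x_1)$, there is exactly one $y_2$ with $y_2^2=F(x_2)$ satisfying $y_1W(x_1)+y_2W(x_2)=0$. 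This uses that $W(x_2)\neq 0$ and $F(x_2)\neq0$ generically, which holds because $\pi$ is nonconstant.

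Consequently, the fibre of $\tilde{\Phi}_0\to T$ over a generic point consists of exactly the two points $(p_1,p_2)$ and $(i(p_1),i(p_2))$. The same argument shows that every generic point of every component of $T$ lifts to a point satisfying \eqref{eq: P1} and \eqref{eq: select component}. By \Cref{remark: trace computations}, the curve cut out by these equations is the irreducible curve $\tilde{\Phi}_0$, so $T$ is the image of $\tilde{\Phi}_0$.

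\emph{Step 2: $T$ is a quotient of $\tilde{\Phi}_0$.} Let $\iota$ be the involution of $X^{(2)}$ induced by $i$, given in coordinates by $(s_x,p_x,s_y,p_y)\mapsto(s_x,p_x,-s_y,p_y)$. It preserves the equations \eqref{eq: P1} and \eqref{eq: select component}, so it preserves $\tilde{\Phi}_0$. Step 1 shows that the fibres of $\tilde{\Phi}_0\to T$ are exactly the $\iota$-orbits, so $T$ is birational to $\tilde{\Phi}_0/\iota$. By the Proposition, $\tilde{\Phi}_0$ is birational to $E'$, which has genus $1$.

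\emph{Step 3: $\iota$ is not a translation.} The quotient of a genus-$1$ curve by an involution has genus $1$ exactly when the involution is a translation, and genus $0$ otherwise. So it suffices to show that $\iota$ acts nontrivially on the space of holomorphic differentials of $E'$. Transported to $J$ via $X^{(2)}\to J$, the map $\iota$ becomes $x\mapsto -x+\text{const}$. Indeed, $i$ acts as $-1$ on $H^0(X,\Omega^1)$, and $i(p)+p$ is the canonical class, so $[i(D)]=-[D]+\text{const}$.

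The fibre of $\pi_*$ over $0$ is a coset of $\ker\pi_*$, which is the elliptic curve $E'$ since $\pi$ is primitive. The map $x\mapsto -x+\text{const}$ sends this coset to itself, and on it its differential is $-1$. Thus $\iota$ acts as $-1$ on $H^0(E',\Omega^1)$, so it is not a translation, and $\tilde{\Phi}_0/\iota\cong\mathbb P^1$. Hence $T$ has geometric genus $0$.

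I expect the main obstacle to be Step 3, specifically making the identification of $\iota$ with $x\mapsto -x+c$ compatible with the birational identification of $\tilde{\Phi}_0$ with $E'$. One must make sure this holds for an arbitrary base point $p_0$, not only a Weierstrass point, which the argument above handles via $[i(D)]=-[D]+\text{const}$. One must also make sure the blown-up point of Remark~\ref{remark: blow down} does not interfere. The latter is harmless, because everything is checked at generic points of the curve.
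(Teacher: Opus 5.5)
Your proof is correct and follows the same route as the paper's: the paper also notes that $\tilde{\Phi}_0 \to T$ is invariant under the involution $j$ induced by $i$. It then takes $p_0$ to be a Weierstrass point so that $j$ becomes $[-1]$ on $J$, and concludes that $T$ is dominated by $E'/[-1]\cong\mathbb P^1$. Your check that the map is generically $2$-to-$1$ and your treatment of an arbitrary base point via $x\mapsto -x+c$ are valid refinements but not needed, since domination by a rational curve already gives genus $0$ by L\"uroth's theorem.
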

\begin{proof}
We denote by $j$ the involution of $X^{(2)}$ given by
\[
j\bigl((p_1)+(p_2)\bigr)=(i(p_1))+(i(p_2)).
\]

There is a map $\tilde{\Phi}_0 \to T$ induced by $k[s_x, p_x] \hookrightarrow k[s_x, p_x, s_y, p_y]$. 
This is clearly dominant and $j$-invariant.
It follows that $T$ is dominated by $\tilde{\Phi}_0/\langle j\rangle$. 

Under the birational identification $X^{(2)}\rightarrow J_X$, taking, without loss of generality, the base point $p_0$ to be a Weierstrass point of $X$, the involution $j$ corresponds to multiplication by $-1$ on $J$. Since $\tilde{\Phi}_0$ is birational to $E'$, we have
$\tilde{\Phi}_0/\langle j\rangle \sim_{\mathrm{bir}} E'/[-1]\cong \mathbb P^1$. In particular, $T$ is dominated by the genus 0 curve $\tilde{\Phi}_0/\langle j\rangle$ and therefore has genus $0$.
\end{proof}

\begin{remark}
    The polynomials $Z(x)$ for which the curve $\frac{Z(x_1)-Z(x_2)}{x_1-x_2}=0$ has a component of genus $0$ have been classified in \cite{MR2041613}. Note however that, as already pointed out, we do not directly work with \eqref{eq: P1}, but rather its degree-$2$ quotient $T$, so all we can say in our setting is that the curve in the $(x_1,x_2)$-plane defined by \eqref{eq: P1} is hyperelliptic (usually of positive genus: see for example Lemma \ref{lemma: critical EC}). Note furthermore that $Z(x)$ is usually a rational function and not a polynomial: it is a polynomial only if all the points in $\pi^{-1}(0_E)$ are points at infinity on $X$.
\end{remark}
    
    Since the curve $T$ deduced from \eqref{eq: P1} is of geometric genus $0$, we can parametrize $s_x,\, p_x$ in terms of a new variable $t$. Moreover, by \eqref{eq: eliminate py}, the variable $p_y$ is determined by the others. Eliminating $s_x,\, p_x,\, p_y$ in favor of $t$ and $s_y$ in \eqref{eq: py2} and \eqref{eq: select component} leaves us with two (reducible) curves in the $(t, s_y)$-plane. The only common component, which we may find by computing the GCD of the defining polynomials, gives a model for $\tilde{\Phi}_0$. 
    \begin{remark}\label{rmk: form of the model}
    This model is of the form $s_y^2=q(t)$ for some rational function $q(t)$. To see this, note first that the identities in Step 2 show that \eqref{eq: select component} may be written as $\left(H_1(s_x, p_x)  + H_2(s_x, p_x) \, p_y\right) \, s_y=0$ for suitable functions $H_1, H_2$. The factor $s_y=0$ does not appear in \eqref{eq: py2}, so when taking the GCD of the two remaining equations we obtain a divisor of $H_1(s_x, p_x)  + H_2(s_x, p_x) \, p_y$, which is in particular linear in $p_y$. Using \eqref{eq: eliminate py} to eliminate $p_y$, we obtain a polynomial of the form $H_1'(s_x, p_x) s_y^2 - H_2'(s_x,p_x)$, and finally replacing $s_x, p_x$ with their expressions in terms of $t$ leads to $s_y^2 = \frac{H_1'(s_x(t), p_x(t))}{H_2'(s_x(t), p_x(t))} =: q(t)$, as claimed.
    \end{remark}
    It only remains to compute the normalization of the curve $s_y^2 = q(t)$, which is easy, because we simply have to replace $q(t)$ with the product of the squarefree parts of its numerator and denominator.

\subsubsection{Step 4.}
It remains to compute the complementary map $\pi' : X \to E'$. The next proposition describes explicitly the natural map $X \hookrightarrow J \to J/\pi^*E \cong E'$.

\begin{proposition}
\label{prop: complementary map}
    Fix $c=0$ to be the origin of $E$, and identify $E'$ with the normalization of the genus-1 component $\tilde{\Phi}_0$ of $\Phi_0$. Choose any origin on $E'$ to make it an elliptic curve. A non-constant map $\pi':X\to E'$ is given by
    \[
        \pi'(p) =
        \sum_{\substack{q \in X \\ \pi(q)+\pi(p)=0 \\ q \neq i(p)}} ((p)+(q)) \quad
        - \sum_{\substack{q' \in X \\ \pi(q')+\pi(p_0)=0 \\ q' \neq i(p_0)}} ((p_0)+(q')),
    \]
    where every pair $(p)+(q)$ represents a point in $\tilde{\Phi}_0 \subseteq X^{(2)}$ and the sum is induced by the elliptic curve operation on $E'$. 
\end{proposition}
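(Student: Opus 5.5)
The plan is to show that the displayed formula is the composition $X \hookrightarrow J \xrightarrow{\ \psi\ } E'$, where $\psi$ is a natural homomorphism from $J$ to $E'$, shifted by a constant so that $p_0 \mapsto 0$. First fix $p_0$ to be a Weierstrass point, as in the proof of the lemma on $T$. Then identify $\tilde{\Phi}_0$ birationally with $E'=\ker(\pi_*)^0 \subseteq J$, via $(p_1)+(p_2)\mapsto[(p_1)+(p_2)-2(p_0)]$. Since $\pi\circ i=-\pi$ and $2\pi(p_0)=\pi(p_0)+\pi(i(p_0))=0$, this class indeed lies in $\ker \pi_*$. By the proposition describing $\Phi_c$, primitivity makes $\ker\pi_*$ connected, so its image is all of $E'$.

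Next, for a point $p\in X$, I would study the set of $q$ appearing in the first sum. The equation $\pi(q)=-\pi(p)=\pi(i(p))$ has exactly $n=\deg\pi$ solutions, counted with multiplicity: the fiber $\pi^{-1}(\pi(i(p)))$, which contains $i(p)$. The pair $(p)+(i(p))$ lies on the rational component $x_1=x_2$ of $\Phi_0$, because $Z(x)=Z(x)$. The remaining $n-1$ points $q$ give the intersection of $\tilde\Phi_0$ with the curve $\{(p)+(q)\}\cong X$ inside $X^{(2)}$.

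In $J$, the point $(p)+(q)$ corresponds to $[(p)-(p_0)]+[(q)-(p_0)]$. Summing over all $q$ in the fiber, including $q=i(p)$, gives
\[
n\,[(p)-(p_0)]+\sum_{q\in\pi^{-1}(\pi(i(p)))}[(q)-(p_0)]=n\,[(p)-(p_0)]+\pi^*\bigl(\pi(i(p))\bigr)-n(p_0).
\]
The second term lies in $\pi^*E$, where $\pi^*:E\to J$ is the pullback embedding, $\pi^*(e)=[\pi^{-1}(e)-\pi^{-1}(0)]$ up to a constant. The term for $q=i(p)$ contributes $[(p)+(i(p))-2(p_0)]=0$, because $(p)+(i(p))$ is canonical and $p_0$ is Weierstrass. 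Hence the first sum, computed in the group $J$, equals $n[(p)-(p_0)]$ plus an element of $\pi^*E$. This element varies with $p$, and that is the subtlety.

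The key step is to compare addition on $E'$ with addition in $J$. Addition on $E'$, with $E'$ regarded as a subgroup of $J$ and the origin chosen, agrees with addition in $J$ up to a translation by a constant. So the first sum is really the sum in $J$ of points of $E'$, and it therefore lies in $E'$ up to that translation. This forces the $\pi^*E$-contribution to be constrained. I would handle this by composing with the isogeny $E'\hookrightarrow J\to J/\pi^*E$. The pieces in $\pi^*E$ die in the quotient, so the image of $p$ becomes $n\cdot\overline{[(p)-(p_0)]}$ plus a constant. This is the nonconstant map $X\to J/\pi^*E$ multiplied by $n$. Because $E'\cap\pi^*E$ is finite, $E'\to J/\pi^*E$ is an isogeny, and the value on $E'$ is determined up to finitely many choices. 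By continuity in $p$ (connectedness of $X$), the map $p\mapsto$ first sum is therefore a morphism $X\to E'$ whose composition with the isogeny is nonconstant, so it is itself nonconstant.

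The second sum only subtracts the value at $p=p_0$, which normalizes $\pi'(p_0)=0$. The main obstacle I expect is making precise that summing the $n-1$ points with the group law of $E'$ defines a morphism. I would argue this by viewing the sum as the trace of the correspondence $X\to\operatorname{Sym}^{n-1}\tilde\Phi_0\to E'$; the second map is a morphism because $E'$ is an abelian variety, and the first is a morphism by properness and normality of $X$. Multiplicities must be included, but a rational map from a smooth curve extends anyway.
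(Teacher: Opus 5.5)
Your proof is correct, but it runs the paper's computation in the opposite direction.

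\textbf{The paper's route.} The paper starts from the homomorphism $n-\pi^*\pi_*\colon J\to\ker\pi_*$ (with $n=\deg\pi$) applied to $[(p)-(p_0)]$. This is a morphism of $p$ by construction. Nonconstancy follows because otherwise $X$ would lie in $[n]^{-1}(\pi^*E)$, a union of elliptic curves. Only afterwards does the paper expand divisors with $(q)\sim\kappa-(i(q))$ to reach the displayed sum.

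\textbf{Your route.} You start from the displayed sum, evaluate it in $J$, and then work modulo $\pi^*E$ through the isogeny $E'\to J/\pi^*E$. Your nonconstancy argument in $J/\pi^*E$ is the paper's argument in another guise. Because you only control the sum modulo $\pi^*E$, you then need an extra argument to see that $p\mapsto(\text{first sum})$ is a morphism: the trace of the correspondence through $\operatorname{Sym}^{n-1}$ of the normalization of $\tilde\Phi_0$.

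That extra step is actually unnecessary. The term you call the subtlety is $[\pi^*(\pi(i(p)))-n(p_0)]$. Since $\pi(i(p))=-\pi(p)$ and $e\mapsto[\pi^{-1}(e)-\pi^{-1}(0_E)]$ is the homomorphism $\pi^*\colon E\to J$, this term equals $-\pi^*\pi_*[(p)-(p_0)]$ plus a constant. So your own formula already shows that the first sum, read in $J$, is $(n-\pi^*\pi_*)[(p)-(p_0)]$ plus a constant. That is exactly the paper's map, and it is visibly a morphism. Your version of the computation replaces the paper's use of $(q)\sim\kappa-(i(q))$ by the linearity of $\pi^*$.

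\textbf{Two smaller points.} First, the sentence ``by continuity in $p$ (connectedness of $X$), the map \dots\ is therefore a morphism'' is not a valid inference. Knowing that the composite with an isogeny is a morphism does not make a set-theoretic lift a morphism unless you already know the lift is continuous. It does no harm here only because your last paragraph, or the explicit formula above, supplies the morphism property independently, so you should delete it.

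Second, the $p_0$ in the statement is not assumed to be a Weierstrass point. Algorithm~\ref{algo: complementary map} uses $\Gamma_{i(p_0)}-\Gamma_{p_0}$, which would vanish for a Weierstrass point. Your reduction is fine, since the second sum is a constant. However, you should keep the Weierstrass base point used to identify $\tilde\Phi_0$ with $\ker\pi_*$ distinct from the $p_0$ appearing in the formula.
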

\begin{remark}
    The advantage of this statement over the abstract composition $X \hookrightarrow J \to J/\pi^*E \cong E'$ is that the description of \Cref{prop: complementary map} allows us to only take sums with respect to the elliptic curve structure on $E'$, rather than the more complicated group structure on $J$. Also note that, in the above sums, points $q$ in $\pi^{-1}(-\pi(p))$ and points $q'$ in $\pi^{-1}(-\pi(p_0))$ should be repeated according to their multiplicity.
\end{remark}
\begin{remark}
    In the statement of the proposition, we fix an arbitrary origin on $E'$. Since the two divisors being subtracted have the same degree, the resulting point of $\operatorname{Pic}^0(E')$ is independent of this auxiliary choice.
\end{remark}
\begin{proof}
    Through the natural embedding $X \hookrightarrow X^{(2)}$ and the birational morphism $X^{(2)} \to J$, a point $p \in X$ is sent to
    \begin{equation}
        \label{eq: X into J}
        p \mapsto (p)+(p_0) \mapsto (p)-(p_0)
    \end{equation}
    which lies on the fiber $\Phi_a$ with $a = \pi(p)+\pi(p_0)$. To get a point on the fixed fiber $\Phi_0$, 
    we compose \eqref{eq: X into J} with
    \begin{equation}
    \label{eq: translation}
        (p)-(p_0) \mapsto \deg(\pi)\cdot [(p)-(p_0)] - \pi^\ast\pi_\ast[(p)-(p_0)].
    \end{equation}
    The point on the right-hand side of \eqref{eq: translation} lies in $\ker \pi_\ast$, which we identify birationally with the genus-$1$ component $\tilde{\Phi}_0$ of $\Phi_0$.
    Moreover, if the resulting map $X \to \Phi_0$ were constant (hence constantly equal to $0$), every point of $X \subseteq J$ would lie in $[\deg \pi]^{-1}\left(\pi^\ast E\right)\subseteq J$. Since each irreducible component of $[\deg \pi]^{-1}\left(\pi^\ast E\right)$ is isomorphic to $\pi^* E \cong E$, this would force $X$ to be contained in an elliptic curve, contradiction.
    Expanding the definition of pull-back and push-forward, we rewrite the right-hand side of \eqref{eq: translation} as
    \[
        \deg(\pi)(p)-\deg(\pi)(p_0)
        - \sum_{\substack{q \in X \\ \pi(q)=\pi(p)}} (q) \quad
        + \sum_{\substack{q' \in X \\ \pi(q')=\pi(p_0)}}(q').
    \] 
    Let $\kappa$ be a canonical divisor on $X$. It is well known that $(q) \sim \kappa - (i(q))$ for every $q \in X$; using this for all $q$ (resp.~$q'$) that appear in the sum, and cancelling out all the occurrences of $\kappa$ (note that $\#\{ q: \pi(q) = \pi(p)\} = \#\{ q' : \pi(q') = \pi(p_0) \} = \deg \pi$, where the points are counted with multiplicity), we obtain
    \[ 
        \deg(\pi)(p)-\deg(\pi)(p_0) 
        + \sum_{\substack{q \in X \\ \pi(q)+\pi(p)=0}} (q) \quad
        - \sum_{\substack{q' \in X \\ \pi(q')+\pi(p_0)=0}} (q').
    \]
    Reorganizing, one gets the sum in the statement. The divisor $(p)+(i(p))-(p_0)-(i(p_0))$ is linearly equivalent to zero and therefore $(p)+(i(p)), \ (p_0)+(i(p_0))$ may be removed from the sums (one can show that $(q)+(i(q))$ lies in the genus-0 component of $\Phi_0$).
\end{proof}

We may compute the image of any point on the genus-$2$ curve $X$ under the map $\pi' : X \to E'$ using \Cref{prop: complementary map}. One can avoid the computation of the individual points $q$ in $\pi^{-1}(-\pi(p))$ by describing the full divisor
\begin{equation}\label{eq: divisor on Phi0tilde}
   \Gamma_p := \sum_{\substack{q \in X \\ \pi(q)+\pi(p)=0 \\ q \neq i(p)}} ((p)+(q))
\end{equation}
on the curve $\tilde{\Phi}_0$ as the vanishing locus of polynomials with coefficients in the field of definition of $p$. %
\begin{remark}
    Once the divisors $\Gamma_p,\, \Gamma_{p_0}$ are known, we obtain $\pi'(p)$ by summing the points in the support of $\Gamma_p - \Gamma_{p_0}$ using the (birational) group law on $\tilde{\Phi}_0$ (which we may compute by mapping the divisor to $E'$ and summing there, or by suitably applying the Riemann-Roch theorem, which avoids the computation of the points in its support).
\end{remark}

We now give equations for \eqref{eq: divisor on Phi0tilde}. Note that the conditions $\pi(q) + \pi(p)=0,\, q \neq i(p)$ simply amount to $(p)+(q)$ being a point on $\tilde{\Phi}_0$. Thus, \eqref{eq: divisor on Phi0tilde} is the divisor on $\tilde{\Phi}_0$ consisting of those $D = (p_1)+(p_2)$, seen as effective divisors of degree $2$, where one of the two points $p_1,\,  p_2$ in the support is the fixed point $p$. The set of divisors $(p)+(p_2)$ in $X^{(2)}$ for which $p$ is one of the two points in the support is the image in $X^{(2)}$ of
\begin{equation}\label{eq: two coordinate axes}
    \{p\} \times X \; \cup \; X \times \{p\} \subset X^2.
\end{equation}
The subvariety cut out in $X^2$ by the (symmetric) equations
\begin{equation}\label{eq: one point is p}
\begin{cases}
(x_1 - x(p)) (x_2 - x(p)) =0 \\ (y_1 -y(p))(y_2 -y(p))=0 
\end{cases}
\end{equation}
differs from \eqref{eq: two coordinate axes} only by a finite number of points: a point $(p_1, p_2)$ that satisfies the first equation has either $x_1=x(p)$ or $x_2=x(p)$. By symmetry, suppose we are in the first case. Then either $y_1 = y(p)$, in which case we have $p_1=p$ and hence $(p_1, p_2) \in \{p\} \times X$, or $y_1 \neq y(p)$, which forces $y_1=-y(p)$ and $y_2=y(p)$, which in turn gives that $x_2$ satisfies $F(x_2) = y_2^2 = y(p)^2$, giving only a finite number of points. Thus, for generic $p$, the projection of \eqref{eq: one point is p} to $X^{(2)}$ will meet $\tilde{\Phi}_0$ precisely in $\Gamma_p$. It is also easy to test if this condition is satisfied for a specific $p$, since this happens if and only if the intersection consists of precisely $\deg \Gamma_p = \deg \pi - 1$ points (counted with multiplicity).

Proceeding as in Step 2, we may write down functions on $X^{(2)}$ that cut out the locus described by \eqref{eq: one point is p}. Intersecting with $\tilde{\Phi}_0$ gives the desired divisor \eqref{eq: divisor on Phi0tilde}. This leads to the following algorithm for the computation of $\pi'$.

\begin{algorithm}\label{algo: complementary map}
    Given a primitive cover $\pi : X \to E$,
    \begin{enumerate}
        \item Compute $\tilde{\Phi}_0$, a smooth model $E' : (w')^2 = h(z')$, and the birational map $\tilde{\Phi}_0 \dashrightarrow E'$.
        \item 
        Compute the divisors $\Gamma_{i(p_0)}$ and $\Gamma_{p_0}$ on $\tilde{\Phi}_0$. Identify them with divisors on $E'$ via the birational map of step (1). Let $q_0$ be the point on $E'$ corresponding to the divisor class $[\Gamma_{i(p_0)}-\Gamma_{p_0}] \in \operatorname{Jac}(E')$,
        and let $q_1 \in E'$ be such that $q_0=2q_1$. 
        \item The map $\tau_{-q_1} \circ \pi'$ is computable and, by \Cref{rmk: standard form map}, of the form $(Z'(x), y W'(x))$.
        Compute $(Z'(x), y W'(x))$ for $2\deg\pi + 1$ points $(x,y) \in X$.
        \item Recover the rational function $Z'(x)$ by interpolation over these images.
        \item Deduce $W'(x) = \pm \sqrt{ h(Z'(x)) / F(x) }$.
    \end{enumerate}
\end{algorithm}

\begin{remark}
    In Step (2), it may happen that some points in the support of $\Gamma_{p_0}$ or $\Gamma_{i(p_0)}$ lie in the (finite) indeterminacy locus of the birational map $\tilde{\Phi}_0 \to E'$. If this is the case, one may simply use a different base point $p_0$.
\end{remark}

\begin{remark}
Alternatively, we could replace steps (3)-(5) with a single computation for a generic point of $X$, but this seems harder to implement. Also note that in Step (5) the sign may be fixed by comparison with one of the values computed in Step (3) -- using any such value for which $yW'(x) \neq 0$.
\end{remark}
\begin{proof}
    We have already justified the correctness and computability of all the steps except (5). Note that the degree of $Z'$ as a rational function in $x$ is equal to $\deg \pi'$, which in turn is equal to $\deg \pi$ since $\pi$ is primitive~\cite[p.~155]{freyCurvesGenus21991}. A degree-$n$ rational function may be interpolated from its values at $2n+1$ points.
\end{proof}

\section{Motivation}
\subsection{Higher dimensional split Jacobians}
The following natural question remains open: let $\pi \colon X \to X_1$ be a ramified cover such that $\operatorname{Jac} X$ splits, up to isogeny, as a product of Jacobians of curves $X_1, X_2, \ldots, X_r$. How can one explicitly recover the curves $X_i$ occurring in this decomposition?


Work of {\sc{Lombardo, Lorenzo Garc\'{\i}a, Ritzenthaler}}, and {\sc{Sijsling}}~\cite{MR4574430} suggests 
that the $X_i$ can be recovered as quotients of the Galois closure of the composition $X \to X_1 \to \mathbb{P}^1$ for a suitably chosen low-degree map $X_1 \to \mathbb{P}^1$. This strategy has been formalized into a conjecture and established 
in the case $g(X)=2,\, g(X_1)=1$ by \textsc{Gallese}~\cite{gallese}, via an explicit construction 
of the complementary elliptic curve $E' = X_2$. That construction has since been revised to align 
with the one presented in this note, substantially simplifying the arguments in the first version of~\cite{gallese}. 

\subsection{Critical values of polynomials}\label{subs: cv}

A different motivation arises from the third author's study of \textit{critical elliptic curves}~\cite[\S 2]{nac2}. These are, up to a twist, elliptic curves $E/K$ with a double cover $E\to\mathbb P^1$ that branches over the same four places as the cover $g:\mathbb P^1\to\mathbb  P^1$ induced by a quartic polynomial $g\in K[x]$. More specifically, given any quartic polynomial $g$, \textsc{Naccarato} defines the \textit{critical curve} relative to $g$ as the projective cubic with affine model
\begin{align}
 E_g: w^2=\operatorname{Disc}_x(g(x)-z).\label{eq: crEC}
 \end{align}
Here we assume $\mathrm{char}(K)=0$. We remark that \eqref{eq: crEC} defines an elliptic curve if and only if the $j$-invariant of the ramification points---that is, that of $u^2=g'(x)$---is different from $1728$ and $\infty$ (see~\cite[\S 4.2]{nac1}), which we assume from now on. The study of critical elliptic curves plays a central role in understanding the arithmetic of critical values (i.e.~branch points) of quartic polynomials; more details can be found in~\cite[\S 5]{nac1} and~\cite{nac2}.

The fiber product 
\begin{equation}\label{eq: polynomial genus 2}
    C_g=\mathbb P^1_x\underset{\mathbb P^1_z}{\times} E_g
\end{equation}
has the affine model
\begin{equation}
y^2=F(x):=\operatorname{Disc}_t\left(\frac{g(t)-g(x)}{t-x}\right).\label{eq: fp reduced}
\end{equation}
Indeed, let $\Delta(z)=\operatorname{Disc}_t(g(t)-z)$; \eqref{eq: polynomial genus 2} gives the equation \begin{align}\label{eq: naive fp model}
    w^2=\Delta(g(x))
\end{align} for $C_g$. Noticing that
\[
\Delta(g(x)) = \operatorname{Disc}_t(g(t)-g(x)) = \operatorname{Disc}_t
\left( (t-x)\frac{g(t)-g(x)}{t-x} \right)=g'(x)^2\operatorname{Disc}_t\left(\frac{g(t)-g(x)}{t-x}\right)
\]
and setting $y=(g'(x))^{-1}w$ transforms \eqref{eq: naive fp model} to $y^2=F(x)$.
This model is smooth: to show this, it suffices to prove that $F(x)$ has six distinct roots. Let $c_1, c_2, c_3$ be the three (distinct, by our previous assumption) finite branch points of $g$. The roots of $F$ are the $g$-preimages of its finite branch points where $g$ does not ramify, that is, the points $\{ x \in g^{-1}(c_i) \bigm\vert g'(x) \neq 0 \}$. As each root of $g'$ has to map with ramification index $2$ to a distinct branch point (a higher index would correspond to a double root of $g'$, preventing it from having two more roots), we get a total of $2+2+2=6$ distinct roots for $F$.

Observe that, by construction, there is a map $\pi_g:C_g\to E_g$ given by:
\begin{align}\label{pi deg 4}
     (x,y) \mapsto (g(x),yg'(x)).
\end{align}

A peculiarity of the quartic polynomial case is that one can drop the second equation from \eqref{eq: equations in the zero fiber}, once \eqref{eq: P1} is considered in $K(X^2)$ (rather than in the symmetric square):

\begin{lemma}
\label{lemma: critical EC}
    Let $g\in K[x]$ be a quartic polynomial such that \eqref{eq: crEC} defines an elliptic curve, and let $$X=C_g, \quad E=E_g, \quad \pi=\pi_g,$$ so that $Z(x)=g(x)$. Then, a plane affine model for $E'$ is given by \begin{align}
          \frac{Z(u) - Z(v)}{u - v} = 0.\label{eq:self-fiber product}
    \end{align}
\end{lemma}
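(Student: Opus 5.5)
The plan is to identify $\tilde{\Phi}_0$ with the curve $D\colon \frac{g(u)-g(v)}{u-v}=0$ directly, through the combinatorics of the four roots of $g(t)-z$, where $z$ is a generic point of $\mathbb P^1_z$. The curve $D$ parametrizes ordered pairs $(u,v)$ of distinct roots of $g(t)-z$. I will describe $\tilde{\Phi}_0$ the same way, as a moduli space of partially ordered root configurations, and read off a birational map to $D$.

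\emph{Step 1: lift to $X^2$.} Here $Z=g$ and $W=g'$. By \eqref{eq: equations in the zero fiber}, the preimage $Y\subset X^2$ of $\tilde{\Phi}_0$ is cut out by $g(x_1)=g(x_2)$, $x_1\neq x_2$, and $y_1g'(x_1)+y_2g'(x_2)=0$. Put $w:=y_1g'(x_1)$ and $z:=g(x_1)$. Then $w^2=\Delta(z)$ by the computation $\Delta(g(x))=g'(x)^2F(x)$, and the last equation forces $y_2=-w/g'(x_2)$. This relation is consistent because $F(x_2)g'(x_2)^2=\Delta(z)$ as well. Hence $Y$ is birational to the curve $\{(u,v,w): (u,v)\in D,\ w^2=\Delta(g(u))\}$, which is a double cover of $D$. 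The involution $s$ acts on it by $(u,v,w)\mapsto(v,u,-w)$, and $\tilde{\Phi}_0=Y/\langle s\rangle$.

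\emph{Step 2: interpret $w$ via roots.} Over a point of $D$, write the roots of $g(t)-z$ as $u,v,a,b$, where $\{a,b\}$ are the roots of the quadratic $\frac{g(t)-z}{(t-u)(t-v)}$. Up to the leading-coefficient factor, $\Delta(z)$ is the square of the Vandermonde product
\[
V=(u-v)(a-b)(u-a)(u-b)(v-a)(v-b).
\]
Hence, over $K(D)$, adjoining $w$ is the same as adjoining $a-b$, that is, choosing an ordering of the complementary pair. Concretely, $w=\lambda V$ for a constant $\lambda$. Here $(u-a)(u-b)(v-a)(v-b)$ is a polynomial in $u,v$ and $z$, and $(u-v)$ lies in $K(D)$. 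So $Y$ is birational to the curve of configurations with $(u,v)$ ordered and $(a,b)$ ordered.

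Next I check how $s$ acts on these configurations. Swapping $u$ and $v$ while keeping $(a,b)$ fixed changes the sign of $V$, hence of $w$. This is exactly $s$. Therefore $\tilde{\Phi}_0=Y/s$ parametrizes data consisting of an unordered pair $\{u,v\}$ together with an ordered complementary pair $(a,b)$.

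\emph{Step 3: conclude.} The data in Step 2 are equivalent to the ordered pair $(a,b)$ of distinct roots of $g(t)-z$ alone, since $\{u,v\}$ is then determined as the complement. The map $(a,b)\mapsto \bigl(\{u,v\},(a,b)\bigr)$ is given by rational functions: $u+v$ and $uv$ are read off from the quadratic factor. This gives a birational map $D\dashrightarrow \tilde{\Phi}_0$, and $\tilde{\Phi}_0$ is birational to $E'$ by Remark~\ref{remark: trace computations}.

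The main obstacle is Step 2. There I must verify that the square root of $\Delta(z)$ generates, over $K(D)$, the same quadratic extension as $a-b$. This requires tracking the constant factors (the leading coefficient of $g$, and the precise relation $w=\lambda V$) and checking that every factor other than $a-b$ is rational on $D$. I also have to check that the sign conventions make $s$ act as claimed. As a sanity check, the Kani--Rosen decomposition for the Klein four-group $\langle s,j\rangle$ acting on $Y$ gives $g(Y)=g(D)+g(\tilde{\Phi}_0)+g(T)=2$. This is consistent with both $D$ and $\tilde{\Phi}_0$ having genus $1$ and $T$ having genus $0$.
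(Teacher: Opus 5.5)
Your main line is correct and yields the same correspondence as the paper, but by a different route. In both proofs an ordered pair $(a,b)$ of roots of $g(t)-z$ is sent to the degree-$2$ divisor over the complementary pair $\{u,v\}$, with $y$-coordinates $\pm w/g'$ fixed by the signed Vandermonde. The paper's $H=P_3+i(P_4)$ is your point $(u,w/g'(u))+(v,-w/g'(v))$, up to the choice of $\sqrt{\Delta}$. Its computation $\operatorname{Stab}_G(H)=\langle(34)\rangle$ is the same fact as your identification of $s$ with the transposition $(uv)$.

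The paper, however, never passes through $\tilde{\Phi}_0$. It descends $H$ to a $K(D)$-point of $X^{(2)}$, maps it to $J$, checks $\pi_*\Psi=0$, and gets degree $1$ from injectivity of $X^{(2)}\to J$ off the canonical class. So it does not rely on \Cref{remark: trace computations}. You instead import $\tilde{\Phi}_0\sim_{\mathrm{bir}}E'$ from that remark and prove $\tilde{\Phi}_0\sim_{\mathrm{bir}}D$ by field theory. Cleanly put: $K(Y)=K(D)(w)=L$, the involutions $s,j$ act as the transpositions $(uv),(ab)$, $Y/j=D$, and $Y/s\cong Y/j$ since these transpositions are conjugate in $S_4$. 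This explains directly why the second equation of \eqref{eq: equations in the zero fiber} can be dropped: it only records an ordering of the complementary pair.

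The ``main obstacle'' you flag is immediate. We have $w^2=c^6V^2$ with $c$ the leading coefficient of $g$, and $a-b=w/\bigl(c^3(u-v)q(u)q(v)\bigr)$ with $q(t)=(t-a)(t-b)\in K(D)[t]$ symmetric in $u,v$. Hence $s$ fixes $a$ and $b$.

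Three points need fixing.

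First, you never justify that $D$ and $Y$ are irreducible, and this is exactly where the hypothesis on $E_g$ enters. Since $g$ is Morse, $\operatorname{Gal}(L/K(z))\cong S_4$ is $2$-transitive, so $D$ is irreducible. Moreover $(34)\notin A_4$, so $\Delta$ is not a square in $K(D)$ and $Y$ is irreducible.

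Second, \Cref{remark: trace computations} assumes $\pi$ primitive. Either note that $\pi_g$ is primitive ($\langle(234)\rangle$ is maximal in $A_4$), or observe that irreducibility of $Y$ already makes $\tilde{\Phi}_0=Y/s$ irreducible, hence birational to a translate of $E'$.

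Third, your sanity check is wrong. $Y$ is birational to the Galois closure, which has genus $4$: Riemann--Hurwitz gives $2g(Y)-2=-48+3\cdot 12+6\cdot 3=6$. Kani--Rosen for $\langle s,j\rangle$ reads $g(Y)+2g(T)=g(Y/s)+g(Y/j)+g(Y/sj)$, i.e.\ $4+0=1+1+2$, where $T=Y/\langle s,j\rangle$ and $Y/sj$ has genus $2$. The proof does not use this check, but it should be corrected or removed.
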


\begin{proof}
Let $L$ be the splitting field of $g(x)-z$ over $K(z)$, the roots being $x=x_1,x_2,x_3,x_4$, and let $D$ be the normalization of the plane curve defined by \eqref{eq:self-fiber product}. It is well-known \cite[\S 4.4]{Serre1992} that if $E_g$ is an elliptic curve---the case of \textit{Morse} polynomials---then $G=\operatorname{Gal}(L/K(z))\simeq S_4$. Sending $(u,v)$ to $(x_1,x_2)$ gives an isomorphism between $K(D)$ and $L^{\langle(34)\rangle}$.
In~\cite[\S 4.1]{nac2} it is shown that $D$ has genus $1$. 

For each $j\in\{1,2,3,4\}$ we define the $L$-point
\[
P_j=\left(x_j,\frac{w}{g'(x_j)}\right)\in X(L),
\]
which satisfies $\pi(P_j) = (z, w)$; notice that $w = yg'(x_1)$. We also introduce
the degree $2$ effective divisor $H=P_3+i(P_4)\in X^{(2)}(L)$. Since, for $\sigma\in G, \ \sigma(w)=\operatorname{sgn}(\sigma)w$, one checks easily that $\operatorname{Stab}_G(H)=\langle(34)\rangle$. It follows that $H$ descends to an effective divisor of degree $2$ with coordinates in $L^{\langle (34)\rangle} \cong K(D)$, that is, a point in $X^{(2)}(K(D))$. By definition, this is the same as a rational map $D \dashrightarrow X^{(2)}$. Composing with the natural map $X^{(2)} \to J$ given by $(p_1)+(p_2) \mapsto [(p_1)+(p_2) - K_X]$, we finally get a rational map 
\[
\Psi : D \dashrightarrow J
\] which, since $D$ is a smooth curve and $J$ is projective, can in fact be extended to a morphism. By construction, the generic point of $D$ maps to $\eta := [(P_3) + (i(P_4)) - K_X]$. Applying $\pi_*$ to this point we find $\pi_*(\eta) = [(z,w) + (z,-w) - \pi_*(K_X)]=0$, so the generic point of $D$ maps inside $\ker \pi_*$, and therefore $\Psi(D) \subset E'$ .

To conclude that $\Psi$ induces a birational morphism $\Psi: D \to E'$, we just need to show that $\deg\Psi=1$. Two degree $2$ effective divisors on a genus $2$ curve are linearly equivalent only when they are in the canonical class, so on the open set $x_3\neq x_4$ we have injectivity, and we are done.
\end{proof}

\begin{remark}
    Let $Y$ be a curve with function field $L$. The proof of \Cref{lemma: critical EC} constructs the following diagram:
    \[
\begin{tikzcd}
                                 & Y \arrow[ld, "3"] \arrow[rd, "2"] &               &                                           & \{1\} \arrow[ld] \arrow[rd]          &                                             \\
X \arrow[rd, "2"] \arrow[d, "4"] &                                            & D \arrow[ld, "3"] \arrow[ldd, "12"] & \langle(234) \rangle \arrow[rd] \arrow[d] &                                  & \langle (34) \rangle \arrow[ld] \arrow[ldd] \\
E \arrow[rd, "2"]                & \mathbb P^1_x \arrow[d, "4"]                 &               & A_4 \arrow[rd]                              & \operatorname{Stab}(1) \arrow[d] &                                             \\
                                 & \mathbb P^1_z                                &               &                                           & S_4                              &                                            
\end{tikzcd} \]

    Note that the extension $L / K(\mathbb{P}^1_x)$ is the splitting field of the cubic polynomial $(g(t)-g(x))/(t-x)$. The elliptic curve $E$ corresponds by definition to the quadratic extension of $K(z)$ generated by the square root of the discriminant of $g(x)-z$, and hence to the index-2 subgroup $A_4$ of $S_4$.
    The fiber product $X$ thus corresponds to the intersection of $A_4$ and the stabilizer of $1$ in $S_4$, namely, the group generated by $(234)$. 
    This is the copy of $A_3$ inside the group $S_3 \cong \operatorname{Gal}(Y/\mathbb{P}^1_x)$.
    Hence, $X$ corresponds to the quadratic extension of $K(x)$ generated by the square root of the discriminant of $(g(t)-g(x))/(t-x)$: this gives another proof that a model for $X$ is given by $y^2=F(x)$. Also note that $\pi$ is primitive, since $\langle(234)\rangle$ is not contained in any larger proper subgroup of $A_4$.
    Furthermore, one can prove that $\Psi\colon D \to J$ is induced by the correspondence $Y$ between $X$ and $D$~\cite[\S 2]{gallese}.
\end{remark}

Computing a model for the complementary curve $E'_g$ in $\operatorname{Jac}(C_g)$ has, for instance, the following application \cite[Theorem 2]{nac2}:
\begin{corollary}\label{cor: fp}
    Let $g$ be a quartic polynomial with rational coefficients. The equation
    \begin{align*}
        g(u)=g(t), \quad u\neq t
    \end{align*}
    has infinitely many solutions over $\mathbb Q(i)$, unless the $j$-invariant of the ramification points of $g$ is $0,\, -27648/11$ or $55296/5$.
\end{corollary}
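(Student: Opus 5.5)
The plan is to reduce the existence of infinitely many solutions to a positive-rank statement about an elliptic curve over $\mathbb Q(i)$. By \Cref{lemma: critical EC}, the plane curve $D$ defined by $\frac{g(u)-g(t)}{u-t}=0$ is birational to the complementary factor $E'=E'_g$ of $\operatorname{Jac}(C_g)$. Solutions of $g(u)=g(t)$ with $u\neq t$ are, apart from finitely many points lying on the line $u=t$ or at singular points of the plane model, exactly the affine points of $D$. The statement therefore reduces to showing that $D(\mathbb Q(i))$ is infinite, that is, that $E'_g$ has positive rank over $\mathbb Q(i)$, except for the three listed $j$-invariants.

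First I would fix a normal form for $g$. Composing with affine maps $x\mapsto ax+b$ and $g\mapsto \lambda g+\mu$ over $\mathbb Q$ changes neither the solution set nor the $j$-invariant of $u^2=g'(x)$. So we may assume $g(x)=x^4+a x^2+bx$, with the remaining scaling still available. Running \Cref{algorithm} (or directly parametrizing $D$, which is a genus-$1$ curve with the rational point at infinity coming from the leading term) gives an explicit Weierstrass model of $E'_g$ over $\mathbb Q(a,b)$. I expect its $j$-invariant to depend only on $j_0:=j(u^2=g'(x))$, up to a quadratic twist. The key structural observation is the one I would use next: over $\mathbb Q(i)$ the relevant twists and the points of $D$ become defined over the field. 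The rational point at infinity of $D$ together with the points coming from $u=-t$-type symmetries and from the ramification points should give explicit points on $E'_g$ over $\mathbb Q(i)$, with coordinates rational functions of $(a,b)$.

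The core step is to show that one such explicit point $P(a,b)\in E'_g(\mathbb Q(i))$ is of infinite order for all specializations except those with $j_0\in\{0,-27648/11,55296/5\}$. Generically over the function field $\mathbb Q(i)(a,b)$ one would check that $P$ is non-torsion, for instance by computing its canonical height or by showing it has non-integral multiples. For the specializations I would argue via Mazur--Kamienny: torsion on elliptic curves over $\mathbb Q(i)$ has bounded order, so $P$ is torsion only if $nP=0$ for some $n$ in an explicit finite list. Each condition $nP=0$ is an algebraic equation in the parameters; since everything depends only on $j_0$ after normalization, it becomes a finite set of values of $j_0$. Solving these equations for each $n\le 12$ should leave exactly $j_0=0,-27648/11,55296/5$, together with possibly spurious values that are excluded by the standing assumption $j_0\neq 1728,\infty$.

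The main obstacle is this last step. Making the torsion computations uniform in $g$ requires both a clean normal form in which $E'_g$ and $P$ depend only on $j_0$ (up to twist), and careful handling of the torsion polynomials for each admissible order over $\mathbb Q(i)$. If a single point does not suffice, I would fall back on a second explicit point and show that the two cannot be simultaneously torsion outside the exceptional set.
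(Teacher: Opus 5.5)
Your reduction is sound: off the line $u=t$, the solutions are the affine points of the smooth cubic $D$, so the corollary amounts to $\operatorname{rk} E'_g(\mathbb{Q}(i))>0$. Your strategy also matches the route the paper indicates (it defers the proof to \cite{nac2} and names $\operatorname{rk}(E'_g)^{(-1)}(\mathbb{Q})>0$ as the key fact): take an explicit point, show it is non-torsion, and obtain the exceptional $j_0$ as its torsion specializations. The gap is that you never produce the point, and your suggested sources mostly give nothing over $\mathbb{Q}(i)$. The points $(u,u)$ with $g'(u)=0$ are defined over the cubic field of $g'$, and for $b\neq 0$ the line $u=-t$ meets $D$ only at infinity.

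The missing observation is this. For $g=x^4+ax^2+bx$ the curve $D$ is $(u+t)(u^2+t^2)+a(u+t)+b=0$. Its leading form gives a rational flex $O=[1:-1:0]$, with tangent $u+t=0$, and two conjugate points $[1:\pm i:0]$. This is exactly why $\mathbb{Q}(i)$ appears in the statement. The map $X=-2b/(u+t)$, $Y=2b(u-t)/(u+t)$ identifies $D$ with $Y^2=X^3-2aX^2-4b^2$, sending $O$ to the origin and $[1:\pm i:0]$ to $(0,\pm 2bi)$.

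The three points at infinity are collinear, so $P=(0,2bi)$ satisfies $\overline{P}=-P$. Hence $P$ is really the $\mathbb{Q}$-point $Q=(0,2b)$ on the $-1$-twist $Y^2=X^3+2aX^2+4b^2$, and Mazur's theorem over $\mathbb{Q}$ suffices. Your bound $n\le 12$ over $\mathbb{Q}(i)$ would need Najman's classification, since Kamienny--Kenku--Momose alone allows orders up to $18$.

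After $Y\mapsto Y+2b$ and rescaling, the twist becomes the Tate normal form $y^2-\beta y=x^3-\beta x^2$ with $c=1$, $\beta=-a^3/(2b^2)$ and $Q=(0,0)$. Here $j_0=1728\cdot 8a^3/(8a^3+27b^2)$. The torsion analysis is then short:
\begin{itemize}
\item From $2Q=(-2a,-2b)$, $Q$ has order $2$ or $4$ only if $b=0$.
\item $Q$ has order $3$ iff $a=0$, i.e.\ $j_0=0$.
\item $Q$ has order $5$ iff $\beta=1$, i.e.\ $j_0=-27648/11$; this is the 11a3 example in the paper.
\item $Q$ has order $6$ iff $\beta=2$, i.e.\ $j_0=55296/5$.
\item Kubert's parametrizations show that orders $7,8,9,10,12$ cannot occur with $c=1$ over $\mathbb{Q}$.
\end{itemize}
With this point inserted, your plan proves the corollary with exactly the stated exceptional set. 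Without it, the core step of the proposal remains only a hope.
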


\section{Examples}
\begin{example}[Degree 3]
\label{example: degree 3}
\textsc{Kuhn}~\cite[\S 6]{Kuhn} provides an explicit presentation of the moduli space of degree-$3$
covers $\pi \colon X \to E$ in terms of three parameters $a, b, c$. Taking $(a,b,c) = (3,4,5)$
yields the cover between the curves
\begin{align*}
    X &\colon y^2 = x^6 + \tfrac{19}{5}x^5 + \tfrac{42}{5}x^4 +
    \tfrac{309}{20}x^3 + \tfrac{63}{4}x^2 + 15x + \tfrac{25}{4} =: F(x), \\
    E &\colon -\frac{20}{247} w^2 = z^3 - \tfrac{84}{247}z^2 +
    \tfrac{164}{247}z - \tfrac{20}{247},
\end{align*}
given by $(x,y) \mapsto (Z(x), \, yW(x))$ with
\[
    Z(x) := \frac{x^2}{x^3 + 3x^2 + 4x + 5}, \qquad
    W(x)  :=  \frac{x^3 - 4x - 10}{x^6 + 6x^5 + 17x^4 + 34x^3 + 46x^2 + 40x + 25}.
\]
In this case, once expressed in terms of $s_x,\, p_x$, equation \eqref{eq: P1} becomes
\begin{equation}\label{eq: P1 example deg 3}
s_x - \frac{1}{5} p_x^2 + \frac{4}{5}p_x=0,
\end{equation}
which is clearly a curve of genus $0$ that we may parametrize using $p_x$. One can show in general that, for every degree-$3$ map $\pi$, the resulting equation \eqref{eq: P1} is linear in $s_x$. Once we eliminate $s_x$ and $p_y$ using \eqref{eq: P1 example deg 3} and \eqref{eq: eliminate py}, and take the GCD of the remaining equations in the variables $(p_x, s_y)$, we are left with the model
\[
5^6s_y^2 = p_x(p_x^3-4p_x^2+15p_x-25)(p_x^4 - 10p_x^3 + 32p_x^2 - \frac{189}{2}p_x + 150)^2,
\]
which has the form described in \Cref{rmk: form of the model}.
The normalization step is trivial, since it consists only of reabsorbing the squares, so we obtain the smooth genus one curve $y^2 = x(x^3-4x^2+15x-25)$, with Weierstrass model
\begin{equation}\label{eq: E prime example}
      E' :  y^2 = x^3 + 25x + 375.
\end{equation}
We verify that the $j$-invariant of $E'$ agrees with \textsc{Kuhn}'s computations, and confirm the same for a sample of further parameter triples $(a,b,c)$. Applying \Cref{algo: complementary map}, we find the explicit degree-$3$ map $\pi' : X \to E'$ given by
\[
\pi'(x,y) = \left( \frac{-6x^3 -6x^2 - 35/4x + 25/4}{x^3 + 4/5x^2 + 2x + 5/4}, \; \frac{-3x^3 + 55/2x^2 + 25/2x + 125/8 }{\left(x^3 + 4/5x^2 + 2x + 5/4\right)^2}\, y  \right).
\]
As predicted by~\cite{Kuhn}, the denominators of the $x$-coordinates of the maps $\pi$ and $\pi'$ satisfy $(x^3+3x^2+4x+5)(x^3+4/5x^2+2x+5/4) = F(x)$. We refer to~\cite{repo4} for the full computational details.

\end{example}
\begin{example}[Degree 4]
Taking as $Z(x)$ the quartic polynomial $g(x)=x^4-8x^2+16x$, \eqref{eq: crEC} and \eqref{eq: fp reduced} give models 
\[ E_g:-w^2=z^3 + 32z^2 - 896z + 4864, \quad C_g:-y^2=x^6 - 16x^4 + 40x^3 + 80x^2 - 288x + 304 \]
for the curves of \S\ref{subs: cv}, after rescaling $w$ and $y$. Exploiting Lemma \ref{lemma: critical EC}, we can compute the Weierstrass model $$(w')^2=(z')^3-432z'-8208$$ for $E'_g$. This is the quadratic twist by $-1$ of the elliptic curve with Cremona label 11a3~\cite{lmfdb}, the so-called \textit{first elliptic curve in nature}. Notice that $(E'_g)^{(-1)}(\mathbb{Q})\simeq\mathbb{Z}/5\mathbb{Z}$ is finite: this is one of the exceptional cases of Corollary \ref{cor: fp}, as one can check by verifying that the $j$-invariant of $u^2=g'(x)$ is $-\frac{27648}{11}$.
Indeed, generically we have $\mathrm{rk}(E'_g)^{(-1)}(\mathbb{Q})>0$, which is the key fact behind the infinitude result given in the corollary, see~\cite[\S 4]{nac2}.

\end{example}
\begin{example}[Degree 5]
    \label{example: degree 5}
We perform a computation analogous to that of~\Cref{example: degree 3}, starting from
\textsc{Shaska}'s presentation~\cite{Shaska} of the moduli space of degree-$5$ covers
$\pi \colon X \to E$ in terms of two parameters $a, b$. We take $(a,b) = (7,1)$.
Any model for $X$ is defined over an extension of $\mathbb{Q}$ of degree at least $2$; we take as base field
the quadratic number field $\mathbb{Q}(z)$, where $z^2 + z - 5 = 0$. Explicit models for $E$, $X$,
and the map $\pi$ can be found by running the code in the accompanying repository~\cite{repo4}.

The main difference from the degree-$3$ case arises in Equation \eqref{eq: P1}, which gives
\begin{align*}
    s_x^4 & + 130 s_x^3 p_x + 130 s_x^3
    + 4255 s_x^2 p_x^2 - 33728 s_x^2 p_x + 4255 s_x^2 \\
    &+ 19500 s_x p_x^3 + 114140 s_x p_x^2 + 114140 s_x p_x + 19500 s_x \\
    &+ 225 p_x^4 - 707130 p_x^3 + 14336251 p_x^2 - 707130 p_x + 225 = 0.
\end{align*}
This again defines a curve of geometric genus $0$, which, unlike the degree-$3$ case, admits no non-singular point over $\mathbb{Q}(z)$, so producing a parametrization requires a further field
extension. Carrying this out and substituting into the remaining equations, one obtains
a singular model for $E'$ whose resolution yields a Weierstrass equation with the
same $j$-invariant as one can obtain from the formulas in \cite{Shaska}.
\end{example}

\section{Further comments}

\subsection{On the \texorpdfstring{$j$}{j}-invariants of \texorpdfstring{$E,\, E'$}{E, E'}}
Assume $\pi$ is primitive of degree $n$. Then $\operatorname{Jac} X$ is isogenous to $(E\times E')/\Gamma_\psi$, where $\psi:E[n]\xrightarrow{\sim}E'[n]$ is a Galois-equivariant anti-isometry for the Weil pairing~\cite{freyCurvesGenus21991,MR1748483} and $\Gamma_\psi$ is its graph. If $E,\, E'$ are non-isogenous and $K$ is large enough (e.g.~contains $E[n],\, E'[n]$), such $\psi$ exist, and one can realize any such pair via a genus-$2$ curve with degree-$n$ maps~\cite[Proposition~1.5]{freyCurvesGenus21991}. In particular, we recover the well-known fact that, even for fixed $n$, there is no universal equation $P_n(j(E),j(E'))=0$ satisfied by the $j$-invariants of $E$ and $E'$. We give a simple example that demonstrates this explicitly for $n=2$. 

\begin{example}
Let $n=2$. For sufficiently general $j,\, j' \in \overline{K}$, we may find Weierstrass equations
\[
E:\ y^2=x^3+ax^2+bx+1 \qquad \text{and} \qquad E':\ y^2=x^3+bx^2+ax+1
\]
with $j(E)=j, \, j(E')=j'$. Then
\[
X:\ y^2=x^6+ax^4+bx^2+1
\]
admits degree-$2$ maps
$(x,y)\mapsto(x^2,y)$ and $(x,y)\mapsto(x^{-2},yx^{-3})$ to $E$ and $E'$, respectively. Since generically any pair $j(E),\, j(E')$ may be realized in this way, the $j$-invariants of $E,\, E'$ satisfy no algebraic relation.
\end{example}

\begin{remark}
    One may still wonder about families of covers $C \to E$ determined by an algebraic relation between the $j$-invariants of $E$ and $E'$. A collection of such families is provided by the modular polynomials \cite{MR2555986, MR4932442}, which prescribe an isogeny between $E$ and $E'$. Naturally interesting families arise by prescribing the ramification behaviour of the associated cover $g : C/\langle i \rangle \to E/[-1]$; for instance,
    in the context of Lemma \ref{lemma: critical EC},
    the elliptic curves $E_g$ and $E'_g$ are generically non-isogenous, but, letting $j$ be the $j$-invariant of $u^2=g'(x)$, we have:
    $$j(E)=\frac{j(j-1536)^3}{2^{18}(j-1728)}, \quad j(E')=\frac{j^2}{4(j-1728)}.$$
    For a proof of these facts, see~\cite[\S 5]{nac2}.
\end{remark}

\subsection{Connection with quadratic twists.}
Assume, as in Remark \ref{rmk: standard form map}, that $\pi\circ i=-\pi$. We may then write $\pi(x,y)=\bigl(Z(x),\,yW(x)\bigr)$ with
\[
F(x)W(x)^2 = f(Z(x)).
\]
Equivalently, $P(t)=\bigl(Z(t),W(t)\bigr)$ is a point in $E^{(F(t))}(K(t))$, where
$E^{(F(t))}:\ F(t)w^2=f(z)$ denotes the quadratic twist of $E$ by $F(t) \in K(t)^\times$. 
Such a point $P(t)$ is necessarily non-torsion: over $K(t,\sqrt{F(t)})$ the twist is isomorphic to $E$ via $(z,w)\mapsto (z,\sqrt{F(t)}\,w)$, and a torsion point would have coordinates that are constant in $t$, contradicting the non-constancy of $Z(t)$. Conversely, any nonconstant $K(t)$-point on $E^{(F(t))}$ yields a surjective map $X\to E$.
The complementary map $\pi' : X \to E'$ similarly yields a nonconstant point $P'(t)$ on
\[
E'^{(F(t))}:\ F(t){w'}^2=h(z').
\]
Specializing at $t=t_0\in K$ with $F(t_0)\neq 0$ gives points $P(t_0), P'(t_0)$ on $E^{(F(t_0))}$ and $E'^{(F(t_0))}$. When $K$ is a number field, by Silverman's specialization theorem~\cite{SilvermanSpecialization}, these are non-torsion for $t_0$ of large height, so we have proved that there exist infinitely many $d=F(t_0) \in K$ such that the quadratic twists $E^{(d)},\, E'^{(d)}$ both have positive rank. Faltings's theorem implies that for each $d \in K^\times$ there are only finitely many solutions $(t_0, y)$ to the equation $dy^2=F(t_0)$, so the $d$ we have constructed give infinitely many distinct classes in $K^\times / K^{\times 2}$.

\begin{remark}
    Stronger results are known: for instance,~\cite[Theorem 4]{MR1214702} shows that, for any pair of elliptic curves $E_1, E_2$ over $\mathbb{Q}$ that satisfy neither $j(E_1)=j(E_2)=0$ nor $j(E_1)=j(E_2)=1728$, there exist infinitely many rational numbers $d$ such that $E_1^{(d)}$ and $E_2^{(d)}$ both have positive rank. The proof relies on the consideration of the Kummer surface $f_1(x_1)=z^2 f_2(x_2)$, where $E_i$ is defined by the equation $y^2 = f_i(x)$. Similar ideas, also related to the density of rational points on Kummer surfaces, appear in the appendix of \cite{MR3648511}.
\end{remark}

\subsection{Generalizations} If $\pi : X \to E$ is a cover from a curve of arbitrary genus $g$ to an elliptic curve, we may proceed in a similar way to describe a complement of $\pi^*E$ inside $J:=\operatorname{Jac} X$. Namely, we may birationally identify $J$ with $X^{(g)}$, and describe birationally the Prym variety of $X \to E$ (that is, $\operatorname{Prym}(X/E) := (\ker \pi_* : J \to E)^0$) as an irreducible component of $\pi(p_1) + \cdots + \pi(p_g)=c$ for sufficiently general $c \in E$. When $g(X)=3$, $\operatorname{Prym}(X/E)$ is generically isogenous to the Jacobian of a genus-$2$ curve $Y$, but unfortunately, we do not know an easy way to recover a suitable $Y$ from this construction (see~\cite{MR3781951} and~\cite{MR4574430} for results in this direction).

Similarly, if $\pi : Y \to X$ is a cover between curves of genera $g(Y)=3$ and $g(X)=2$, one may use similar techniques to describe the elliptic curve $E := \operatorname{Prym}(Y/X)$ birationally inside $Y^{(3)}$. However, in this case, the Riemann-Hurwitz formula shows that $\pi$ is necessarily an unramified double cover, and there are easier ways to recover $E$ (see for example~\cite[Proposition 2.2]{MR2406115}).

\bibliographystyle{plain}
\bibliography{biblio}

@book{Serre1988,
  author    = {Serre, Jean-Pierre},
  title     = {Algebraic Groups and Class Fields},
  series    = {Graduate Texts in Mathematics},
  volume    = {117},
  publisher = {Springer},
  address   = {New York},
  year      = {1988},
  doi       = {10.1007/978-1-4612-1035-1}
}

@article {Kuhn,
    AUTHOR = {Kuhn, Robert M.},
     TITLE = {Curves of genus {$2$} with split {J}acobian},
   JOURNAL = {Trans. Amer. Math. Soc.},
  FJOURNAL = {Transactions of the American Mathematical Society},
    VOLUME = {307},
      YEAR = {1988},
    NUMBER = {1},
     PAGES = {41--49},
      ISSN = {0002-9947,1088-6850},
   MRCLASS = {14H40 (11G10)},
  MRNUMBER = {936803},
MRREVIEWER = {H.-G.\ R\"uck},
       DOI = {10.2307/2000749},
       URL = {https://doi.org/10.2307/2000749},
}

@article{MasserWustholz1993,
  author  = {David W. Masser and Gisbert W\"ustholz},
  title   = {Isogeny estimates for abelian varieties, and finiteness theorems},
  journal = {Annals of Mathematics},
  series  = {2},
  volume  = {137},
  number  = {3},
  pages   = {459--472},
  year    = {1993},
  doi     = {10.2307/2946529}
}

@article{MasserWustholz1995,
  author  = {David W. Masser and Gisbert W\"ustholz},
  title   = {Factorization estimates for abelian varieties},
  journal = {Publications Math\'ematiques de l'IH\'ES},
  volume  = {81},
  pages   = {5--24},
  year    = {1995},
  doi     = {10.1007/BF02699374}
}

@misc{lmfdb,
  Key    = {LMFDB},
  author       = {The {LMFDB Collaboration}},
  title        = {The {L}-functions and modular forms database},
  howpublished = {\url{https://www.lmfdb.org/EllipticCurve/Q/11a3/}},
}

@article {SilvermanSpecialization,
    AUTHOR = {Silverman, Joseph H.},
     TITLE = {Heights and the specialization map for families of abelian
              varieties},
   JOURNAL = {J. Reine Angew. Math.},
  FJOURNAL = {Journal f\"ur die Reine und Angewandte Mathematik. [Crelle's
              Journal]},
    VOLUME = {342},
      YEAR = {1983},
     PAGES = {197--211},
      ISSN = {0075-4102,1435-5345},
   MRCLASS = {14K15 (14D10 14G25)},
  MRNUMBER = {703488},
MRREVIEWER = {Gerd\ Faltings},
       DOI = {10.1515/crll.1983.342.197},
       URL = {https://doi.org/10.1515/crll.1983.342.197},
}

@article {Shaska,
    AUTHOR = {Shaska, Tanush},
     TITLE = {Curves of genus 2 with {$(N,N)$} decomposable {J}acobians},
   JOURNAL = {J. Symbolic Comput.},
  FJOURNAL = {Journal of Symbolic Computation},
    VOLUME = {31},
      YEAR = {2001},
    NUMBER = {5},
     PAGES = {603--617},
      ISSN = {0747-7171,1095-855X},
   MRCLASS = {14H45 (14H30 14H40)},
  MRNUMBER = {1828706},
       DOI = {10.1006/jsco.2001.0439},
       URL = {https://doi.org/10.1006/jsco.2001.0439},
}

@article {Lombardo,
    AUTHOR = {Lombardo, Davide},
     TITLE = {Computing the geometric endomorphism ring of a genus-2
              {J}acobian},
   JOURNAL = {Math. Comp.},
  FJOURNAL = {Mathematics of Computation},
    VOLUME = {88},
      YEAR = {2019},
    NUMBER = {316},
     PAGES = {889--929},
      ISSN = {0025-5718,1088-6842},
   MRCLASS = {11F80 (11G10 11Y99)},
  MRNUMBER = {3882288},
MRREVIEWER = {John\ L.\ Boxall},
       DOI = {10.1090/mcom/3358},
       URL = {https://doi.org/10.1090/mcom/3358},
}

@misc{gallese,
      title={How to split two-dimensional {J}acobians: a geometric construction}, 
      author={Andrea Gallese},
      year={2025},
      eprint={2412.07414},
      archivePrefix={arXiv},
      primaryClass={math.AG},
      note={\url{https://arxiv.org/abs/2412.07414}}, 
}

@unpublished{nac1,
      title={The arithmetic of critical values {I}: equicritical quartic polynomials}, 
      author={Francesco Naccarato},
      year={2024},
      archivePrefix={arXiv},
      note={\url{https://arxiv.org/abs/2501.03244}}}

@misc{nac2,
      title={The arithmetic of critical values {II}: critical elliptic curves and applications}, 
      author={Francesco Naccarato},
      year = {in preparation}
}

@book{Serre1992,
  author    = {Serre, Jean-Pierre},
  title     = {Topics in Galois Theory},
  series    = {Research Notes in Mathematics},
  publisher = {Jones \& Bartlett},
  address   = {Boston},
  year      = {1992}
}

@article {MR4574430,
    AUTHOR = {Lombardo, Davide and Lorenzo Garc\'{\i}a, Elisa and
              Ritzenthaler, Christophe and Sijsling, Jeroen},
     TITLE = {Decomposing {J}acobians via {G}alois covers},
   JOURNAL = {Exp. Math.},
  FJOURNAL = {Experimental Mathematics},
    VOLUME = {32},
      YEAR = {2023},
    NUMBER = {1},
     PAGES = {218--240},
      ISSN = {1058-6458,1944-950X},
   MRCLASS = {14H40 (11R32 14H45 14L30)},
  MRNUMBER = {4574430},
MRREVIEWER = {Daniel\ Valli\`eres},
       DOI = {10.1080/10586458.2021.1926008},
       URL = {https://doi.org/10.1080/10586458.2021.1926008},
}

@article {MR3427148,
    AUTHOR = {Kumar, Abhinav},
     TITLE = {Hilbert modular surfaces for square discriminants and elliptic
              subfields of genus 2 function fields},
   JOURNAL = {Res. Math. Sci.},
  FJOURNAL = {Research in the Mathematical Sciences},
    VOLUME = {2},
      YEAR = {2015},
     PAGES = {Art. 24, 46},
      ISSN = {2522-0144,2197-9847},
   MRCLASS = {11F41 (14H40 14J28)},
  MRNUMBER = {3427148},
MRREVIEWER = {Michael\ M.\ Schein},
       DOI = {10.1186/s40687-015-0042-9},
       URL = {https://doi.org/10.1186/s40687-015-0042-9},
}

@incollection {MR861976,
    AUTHOR = {Milne, James S.},
     TITLE = {Jacobian varieties},
 BOOKTITLE = {Arithmetic geometry ({S}torrs, {C}onn., 1984)},
     PAGES = {167--212},
 PUBLISHER = {Springer, New York},
      YEAR = {1986},
      ISBN = {0-387-96311-1},
   MRCLASS = {14H40},
  MRNUMBER = {861976},
}

@incollection{freyCurvesGenus21991,
    AUTHOR = {Frey, Gerhard and Kani, Ernst},
     TITLE = {Curves of genus {$2$} covering elliptic curves and an
              arithmetical application},
 BOOKTITLE = {Arithmetic algebraic geometry ({T}exel, 1989)},
    SERIES = {Progr. Math.},
    VOLUME = {89},
     PAGES = {153--176},
 PUBLISHER = {Birkh\"auser Boston, Boston, MA},
      YEAR = {1991},
      ISBN = {0-8176-3513-0},
   MRCLASS = {14G40 (11G05)},
  MRNUMBER = {1085258},
MRREVIEWER = {Joseph\ H.\ Silverman},
       DOI = {10.1007/978-1-4612-0457-2\_7},
       URL = {https://doi.org/10.1007/978-1-4612-0457-2_7},
}

@misc{repo4,
  author       = {Gallese, Andrea and Lombardo, Davide and Naccarato, Francesco and Zannier, Umberto},
  title        = {{GitHub repository}},
  year         = {2026},
  publisher    = {GitHub},
  howpublished = {\url{https://github.com/G4ll/NoteOnGenus2Covers}},
}

@article {MR1214702,
    AUTHOR = {Kuwata, Masato and Wang, Lan},
     TITLE = {Topology of rational points on isotrivial elliptic surfaces},
   JOURNAL = {Internat. Math. Res. Notices},
  FJOURNAL = {International Mathematics Research Notices},
      YEAR = {1993},
    NUMBER = {4},
     PAGES = {113--123},
      ISSN = {1073-7928,1687-0247},
   MRCLASS = {11G05 (14J27)},
  MRNUMBER = {1214702},
MRREVIEWER = {Henri\ Darmon},
       DOI = {10.1155/S107379289300011X},
       URL = {https://doi.org/10.1155/S107379289300011X},
}

@article {MR1748483,
    AUTHOR = {Howe, Everett W. and Lepr\'evost, Franck and Poonen, Bjorn},
     TITLE = {Large torsion subgroups of split {J}acobians of curves of
              genus two or three},
   JOURNAL = {Forum Math.},
  FJOURNAL = {Forum Mathematicum},
    VOLUME = {12},
      YEAR = {2000},
    NUMBER = {3},
     PAGES = {315--364},
      ISSN = {0933-7741,1435-5337},
   MRCLASS = {11G30 (14H25 14H40)},
  MRNUMBER = {1748483},
MRREVIEWER = {Edward\ F.\ Schaefer},
       DOI = {10.1515/form.2000.008},
       URL = {https://doi.org/10.1515/form.2000.008},
}

@article {MR3781951,
    AUTHOR = {Ritzenthaler, Christophe and Romagny, Matthieu},
     TITLE = {On the {P}rym variety of genus 3 covers of genus 1 curves},
   JOURNAL = {\'Epijournal G\'eom. Alg\'ebrique},
  FJOURNAL = {\'Epijournal de G\'eom\'etrie Alg\'ebrique. EPIGA},
    VOLUME = {2},
      YEAR = {2018},
     PAGES = {Art. 2, 8},
      ISSN = {2491-6765},
   MRCLASS = {14H40 (11G05 14K02 14Q05)},
  MRNUMBER = {3781951},
MRREVIEWER = {Jordi\ Gu\`ardia},
       DOI = {10.46298/epiga.2018.volume2.3663},
       URL = {https://doi.org/10.46298/epiga.2018.volume2.3663},
}

@article {MR2406115,
    AUTHOR = {Bruin, Nils},
     TITLE = {The arithmetic of {P}rym varieties in genus 3},
   JOURNAL = {Compos. Math.},
  FJOURNAL = {Compositio Mathematica},
    VOLUME = {144},
      YEAR = {2008},
    NUMBER = {2},
     PAGES = {317--338},
      ISSN = {0010-437X,1570-5846},
   MRCLASS = {11G30 (14H40)},
  MRNUMBER = {2406115},
MRREVIEWER = {Jordi\ Gu\`ardia},
       DOI = {10.1112/S0010437X07003314},
       URL = {https://doi.org/10.1112/S0010437X07003314},
}

@article {MR3648511,
    AUTHOR = {Corvaja, Pietro and Zannier, Umberto},
     TITLE = {On the {H}ilbert property and the fundamental group of
              algebraic varieties},
   JOURNAL = {Math. Z.},
  FJOURNAL = {Mathematische Zeitschrift},
    VOLUME = {286},
      YEAR = {2017},
    NUMBER = {1-2},
     PAGES = {579--602},
      ISSN = {0025-5874,1432-1823},
   MRCLASS = {14G05 (12E25 12F12 14J28)},
  MRNUMBER = {3648511},
MRREVIEWER = {Daniel\ Loughran},
       DOI = {10.1007/s00209-016-1775-x},
       URL = {https://doi.org/10.1007/s00209-016-1775-x},
}

@article {MR2041613,
    AUTHOR = {Avanzi, Roberto M. and Zannier, Umberto},
     TITLE = {The equation {$f(X)=f(Y)$} in rational functions {$X=X(t)$},
              {$Y=Y(t)$}},
   JOURNAL = {Compositio Math.},
  FJOURNAL = {Compositio Mathematica},
    VOLUME = {139},
      YEAR = {2003},
    NUMBER = {3},
     PAGES = {263--295},
      ISSN = {0010-437X,1570-5846},
   MRCLASS = {14H45 (11C08 11G30 11R09 12E05)},
  MRNUMBER = {2041613},
MRREVIEWER = {Yuri\ Bilu},
       DOI = {10.1023/B:COMP.0000018136.23898.65},
       URL = {https://doi.org/10.1023/B:COMP.0000018136.23898.65},
}

@incollection {MR2555986,
    AUTHOR = {Frey, Gerhard and Kani, Ernst},
     TITLE = {Curves of genus 2 with elliptic differentials and associated
              {H}urwitz spaces},
 BOOKTITLE = {Arithmetic, geometry, cryptography and coding theory},
    SERIES = {Contemp. Math.},
    VOLUME = {487},
     PAGES = {33--81},
 PUBLISHER = {Amer. Math. Soc., Providence, RI},
      YEAR = {2009},
      ISBN = {978-0-8218-4716-9},
   MRCLASS = {14G32 (11G30 14G25 14H30)},
  MRNUMBER = {2555986},
MRREVIEWER = {Meirav\ Amram},
       DOI = {10.1090/conm/487/09524},
       URL = {https://doi.org/10.1090/conm/487/09524},
}

@article {MR4932442,
    AUTHOR = {Djukanovi\'{c}, Martin},
     TITLE = {Families of split {J}acobians with isogenous components},
   JOURNAL = {J. Th\'{e}or. Nombres Bordeaux},
  FJOURNAL = {Journal de Th\'{e}orie des Nombres de Bordeaux},
    VOLUME = {37},
      YEAR = {2025},
    NUMBER = {1},
     PAGES = {49--77},
      ISSN = {1246-7405,2118-8572},
   MRCLASS = {14H40 (11G10 14H10 14H52 14K02)},
  MRNUMBER = {4932442},
       DOI = {10.5802/jtnb.1312},
       URL = {https://doi.org/10.5802/jtnb.1312},
}










\end{document}